\newtheorem{theorem}{Theorem}[section]
\newtheorem{lemma}[theorem]{Lemma}
\theoremstyle{definition}
\theoremstyle{remark}
\numberwithin{equation}{section}
\begin{document}
\title[]{Unitary
operators in the orthogonal complement of a type $\mathrm{I}$ von
Neumann algebra in a type $\mathrm{II}^{}_{1}$ factor}

\author{Xiaoyan Zhou}
\address{Xiaoyan Zhou}\address{School of Mathematical Sciences, Dalian
               University of Technology. Dalian~{116024}. China}
\email{xiaoyanzhou.oa@gmail.com}

\author{Rui Shi}
\address{Rui Shi}
\address{School of Mathematical Sciences, Dalian
               University of Technology. Dalian~{116024}. China}
\email{ruishi.math@gmail.com} \subjclass[2000]{Primary 46L10;
Secondary 47C15}

\begin{abstract}
 It is well-known that the equality $$L^{}_{G}\ominus
L^{}_{H}=\overline{\mathrm{span}\{L_{g}:g\in G-H\}^{\mathrm{SOT}}}$$
holds for $G$ an i.c.c.~group and $H$ a subgroup in $G$, where
$L^{}_{G}$ and $L^{}_{H}$ are the corresponding group von Neumann
algebras and $L^{}_{G}\ominus L^{}_{H}$  is the set $\{x\in
L^{}_{G}:E^{}_{L^{}_{H}}(x)=0\}$ with $E^{}_{L^{}_{H}}$ the
conditional expectation defined from $L^{}_{G}$ onto $L^{}_{H}$.
Inspired by this, it is natural to ask whether the equality
$$N\ominus A=\overline{\mathrm{span}\{u: u\mbox{ is unitary in
}N\ominus A\}^{\mathrm{SOT}}}$$ holds for $N$ a type
$\mbox{II}^{}_{1}$ factor and $A$ a von Neumann subalgebra of $N$.
In this paper, we give an affirmative answer to this question for
the case $A$ a type I von Neumann algebra.
\\
Key words.~~type $\mathrm{II}_{1}$ factor, type $\mathrm{I}$ von
Neumann algebra, conditional expectation, orthogonal complement,
unitary operator
\end{abstract}
\maketitle
\section{introduction}

Throughout this paper, all Hilbert spaces discussed are
{\textit{complex and separable}}. Let $(N,\tau)$ be a finite von
Neumann algebra with a faithful normal normalized trace $\tau$ and
$A$ be a von Neumann subalgebra of $N$. Then the trace $\tau$
induces an inner product $\langle\cdot,\cdot\rangle$ on $N$ which is
defined by $\langle x,y\rangle=\tau(y^{*}x), \forall x,y\in N$.
Denote by $L^{2}(N)$ (resp.~$L^{2}(A)$) the completion of $N$
(resp.~$A$) with respect to the inner product, then $L^{2}(A)$ is a
subspace of $L^{2}(N)$. Let $e^{}_{A}$ denote the projection from
$L^{2}(N)$ onto $L^{2}(A)$. The trace-preserving conditional
expectation $E^{}_{A}$ of $N$ onto $A$ is defined to be the
restriction $e^{}_{A}|^{}_{N}$. By\cite{A}, $E^{}_{A}$ has the
following properties:
\begin{enumerate}
\item $e^{}_{A}|_{N}=E^{}_{A}$ is a norm reducing map from $N$ onto $A$ with
$E^{}_{A}(1)=1$;
\item the equality $E^{}_{A}(axb)=aE^{}_{A}(x)b$ holds for all $x\in
N$ and $a, b\in A$;
\item
$\tau(xE^{}_{A}(y))=\tau(E^{}_{A}(x)E^{}_{A}(y))=\tau(E^{}_{A}(x)y)$
for all $x,y\in N$;
\item $e^{}_{A}xe^{}_{A}=E^{}_{A}(x)e^{}_{A}=e^{}_{A}E^{}_{A}(x)$
for all $x\in N$.
\end{enumerate}

Let $G$ be a (countable) discrete i.c.c.~group and denote by
$l^{2}(G)$ the Hilbert space of square-summable sequences. Given
every $g$ in $G$, the operator $L^{}_{g}$ is defined by
$(L^{}_{g}x)(h)=x(g^{-1}_{}h)$, for every $x$ in $l^{2}(G)$ and $h$
in $G$. This is a unitary operator. Let $L^{}_{G}$ be the von
Neumann algebra generated by $\{L^{}_{g}:g\in G\}$. It is well-known
that $L^{}_{G}$ is a type $\mbox{II}^{}_{1}$ factor. For a subgroup
$H$ in $G$, define $$L^{}_{G}\ominus L^{}_{H}\triangleq\{x\in
L^{}_{G}:E^{}_{L^{}_{H}}(x)=0\}.$$ Thus we obtain that
$$L^{}_{G}\ominus
L^{}_{H}=\overline{\mathrm{span}\{L_{g}:g\in
G-H\}^{\mathrm{SOT}}}.$$ Inspired by this, it is natural to ask
whether the equality
$$N\ominus A=\overline{\mathrm{span}\{u: u\mbox{ is unitary in
}N\ominus A\}^{\mathrm{SOT}}}$$ holds for $N$ a type
$\mbox{II}^{}_{1}$ factor and $A$ a von Neumann subalgebra of
$N$. In this paper, we give an affirmative answer to this question for the
case $A$ a type $\mbox{I}$ von Neumann algebra in Theorem $2.6$.

\section{proofs}

In this paper, the matrix representations of operators will be used
frequently. We briefly recall the relationship between conditional
expectations with respect to matrix representations of operators.
Let $e^{}_{1},\ldots, e^{}_{n}\in N$ be mutually
equivalent orthogonal projections such that $\sum^{n}_{i=1}e^{}_{i}=1$, where
$1$ is the identity of $N$. Then for every $x\in N$, we can
express $x$ in the form $$x=\begin{pmatrix}
x^{}_{11}&\cdots&x^{}_{1n}\\
\vdots&\ddots&\vdots\\
x^{}_{n1}&\cdots&x^{}_{nn}\\
\end{pmatrix}
\begin{matrix}
\mbox{ran }e^{}_{1}\\
\vdots\\
\mbox{ran }e^{}_{n}\\
\end{matrix}$$
and there exists a $*$-isomorphism $\varphi$ from $N$ onto
$\mathbb{M}_{n}(N^{}_{e^{}_{1}})$, where $N^{}_{e^{}_{1}}$ is the
restriction of $e^{}_{1}Ne^{}_{1}$ on $\mathrm{ran}~e^{}_{1}$ and
denote by $\mathbb{M}_{n}(N^{}_{e^{}_{1}})$ the set $n$-by-$n$
matrices with entries in $N^{}_{e^{}_{1}}$.  On the other hand, let
$\tau$ be a faithful normal normalized trace on $N$, and the trace
$\tau^{}_{n}$ on $\mathbb{M}_{n}(N)$ is defined by
${\tau}^{}_{n}({x})=\frac{1}{n}(\sum^{n}_{i=1}\tau(x^{}_{ii}))$,
where $x$ in $\mathbb{M}_{n}(N)$ is of the form
$$x=\begin{pmatrix}
x^{}_{11}&\cdots&x^{}_{1n}\\
\vdots&\ddots&\vdots\\
x^{}_{n1}&\cdots&x^{}_{nn}\\
\end{pmatrix}$$ and $x^{}_{ij}$ is in $N$ for $i,j=1,\ldots,n$. We observe that
$\tau^{}_{n}$ is a faithful normal normalized trace. For a von
Neumann subalgebra $A$ in $N$, there exist conditional expectations
$E^{}_{A}$ from $N$ onto $A$ and $E^{}_{\mathbb{M}_{n}(A)}$ from
$\mathbb{M}_{n}(N)$ onto $\mathbb{M}_{n}(A)$. Given fixed $i^{}_{0}$
and $j^{}_{0}$, let $x^{}_{i^{}_{0}j^{}_{0}}$ denote again the
operator in $\mathbb{M}_{n}(N)$ with all entries $0$ but the
$(i^{}_{0},j^{}_{0})$ entry $x^{}_{i_{0}j_{0}}$. By the fact that
$E^{}_{\mathbb{M}_{n}(A)}$ is ${\mathbb{M}_{n}(A)}$-modular, the
equality
$$E^{}_{\mathbb{M}_{n}(A)}(x^{}_{i^{}_{0}j^{}_{0}})=
E^{}_{\mathbb{M}_{n}(A)}(e^{}_{i^{}_{0}}x^{}_{i^{}_{0}j^{}_{0}}e^{}_{j^{}_{0}})=
e^{}_{i^{}_{0}}E^{}_{\mathbb{M}_{n}(A)}(x^{}_{i^{}_{0}j^{}_{0}})e^{}_{j^{}_{0}}$$
ensures that all but the $(i^{}_{0},j^{}_{0})$ entry of
$E^{}_{\mathbb{M}_{n}(A)}(x^{}_{i^{}_{0}j^{}_{0}})$ are $0$, where
$e^{}_{i}$ is the diagonal projection with all entries $0$ except
the $(i,i)$ one being the identity of $N$. Therefore
$x^{}_{i^{}_{0}j^{}_{0}}$ is in
$\mathbb{M}_{n}(N)\ominus\mathbb{M}_{n}(A)$ if and only if the
$(i^{}_{0},j^{}_{0})$ entry of $x^{}_{i^{}_{0}j^{}_{0}}$ is in
$N\ominus A$.

In what follows, $N$ will always denote a von Neumann algebra. Every
subalgebra of $N$ we consider here is self-adjoint, weakly closed
and contains the unit $1$ of $N$. For a subset $\mathscr{S}\subseteq
N$, denote by $\mathscr{U}(\mathscr{S})$ the unitary operators in
$\mathscr{S}$.
\begin{lemma}
Let $N$ be a von Neumann algebra and
$$M_{n}=\{x:x\in \mathbb{M}_{n}(N),x^{}_{ii}=0, i=1,\ldots,n\},$$ then
$M_{n}=\mathrm{span}\{u:u\in\mathscr{U}(M_{n})\}$.
\end{lemma}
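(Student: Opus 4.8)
The plan is to prove the two inclusions separately, the trivial one being $\mathrm{span}\{u:u\in\mathscr{U}(M_n)\}\subseteq M_n$: the matrices with vanishing diagonal form a linear subspace of $\mathbb{M}_n(N)$, so any finite linear combination of zero-diagonal unitaries again has zero diagonal and hence lies in $M_n$. Throughout I assume $n\geq 2$; for $n=1$ one has $M_1=\{0\}$ and $\mathscr{U}(M_1)=\emptyset$, so both sides equal $\{0\}$ and there is nothing to prove.

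For the reverse inclusion $M_n\subseteq\mathrm{span}\{u:u\in\mathscr{U}(M_n)\}$, I would first reduce an arbitrary $x\in M_n$ to elementary pieces. Writing $E_{ij}$ for the matrix unit carrying the identity of $N$ in position $(i,j)$ and zeros elsewhere, every $x\in M_n$ decomposes as $x=\sum_{i\neq j}x_{ij}E_{ij}$ with $x_{ij}\in N$. Since in a unital von Neumann algebra every element is a finite linear combination of unitaries---each self-adjoint contraction $h$ equals $\frac12(u+u^{*})$ with $u=h+i(1-h^{2})^{1/2}$ unitary, and a general element splits into its real and imaginary parts---I may write each entry as $x_{ij}=\sum_{k}\lambda^{(ij)}_{k}v^{(ij)}_{k}$ with $v^{(ij)}_{k}\in\mathscr{U}(N)$. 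Thus it suffices to show that each elementary term $vE_{ij}$, with $i\neq j$ and $v\in\mathscr{U}(N)$, lies in $\mathrm{span}\{u:u\in\mathscr{U}(M_n)\}$.

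The key step is to exhibit $vE_{ij}$ as an average of two zero-diagonal unitaries. For this I would choose a fixed-point-free permutation $\sigma$ of $\{1,\ldots,n\}$ with $\sigma(j)=i$; such a $\sigma$ exists for every $n\geq 2$ and every pair $i\neq j$, for instance the $n$-cycle that begins $j\mapsto i$ and then runs through the remaining indices. Given $\sigma$, consider the monomial (generalized permutation) matrices $U_{\pm}\in\mathbb{M}_n(N)$ whose only nonzero entries sit in the positions $(\sigma(k),k)$: place $v$ in position $(i,j)=(\sigma(j),j)$ in both $U_{+}$ and $U_{-}$, and in each remaining position $(\sigma(k),k)$ with $k\neq j$ place the identity $1$ of $N$ in $U_{+}$ and $-1$ in $U_{-}$. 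Because each $U_{\pm}$ has exactly one unitary nonzero entry in every row and every column, it is a unitary of $\mathbb{M}_n(N)$; because $\sigma$ has no fixed point, its diagonal vanishes, so $U_{\pm}\in\mathscr{U}(M_n)$. Averaging, $\frac12(U_{+}+U_{-})$ retains $v$ in position $(i,j)$ while every other entry cancels, so $\frac12(U_{+}+U_{-})=vE_{ij}$. Assembling these identities over the finitely many terms presents $x$ as a finite linear combination of elements of $\mathscr{U}(M_n)$.

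I expect the only point requiring care to be the combinatorial construction of the fixed-point-free $\sigma$ sending $j$ to $i$, together with the observation that the completion entries $\pm 1$ keep both $U_{+}$ and $U_{-}$ unitary while their difference is supported off position $(i,j)$. Once that permutation is in hand, the unitarity and the vanishing of the diagonal of $U_{\pm}$ are routine, and the cancellation in the average is a one-line computation. I would also check the degenerate case $n=2$, where $\sigma$ is forced to be the transposition $(1\,2)$, to confirm that the construction specializes correctly.
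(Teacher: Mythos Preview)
Your proof is correct and follows essentially the same strategy as the paper: reduce to a single off-diagonal entry, write that entry as a linear combination of unitaries of $N$, complete each unitary entry to a zero-diagonal unitary of $\mathbb{M}_n(N)$ via a permutation pattern, and average with the sign-flipped completion to cancel the extra entries. The only cosmetic difference is that the paper arranges the completion as a $2\times 2$ block anti-diagonal matrix, whereas you use a single $n$-cycle; your version is slightly more direct and sidesteps the block bookkeeping.
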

\begin{proof}
For each $x$ in $M_{n}$, we can write $x$ in the form
$$x=\begin{pmatrix}
x^{}_{11}&x^{}_{12}&\cdots&x^{}_{1n}\\
x^{}_{21}&x^{}_{22}&\cdots&x^{}_{2n}\\
\vdots&\vdots&\ddots&\vdots\\
x^{}_{n1}&x^{}_{n2}&\cdots&x^{}_{nn}\\
\end{pmatrix}^{}_{n\times n},$$ where $x^{}_{ij}$ is in $N$ and
$x^{}_{ii}=0$,
for $i,j=1,\ldots,n$. Without loss of generality, we may assume
$x^{}_{i^{}_{0}j^{}_{0}}\neq 0,i^{}_{0}<j^{}_{0}$ and all other
entries $0$. Thus we can write $x$ in the form of block matrix
$$x=\begin{pmatrix}
X^{}_{11}&X^{}_{12}\\
X^{}_{21}&X^{}_{22}\\
\end{pmatrix},$$ where $X^{}_{12}$ is a $k$-by-$k$ matrix for some
$k\leq n-1$ with $x^{}_{i^{}_{0}j^{}_{0}}$ on the main diagonal of
$X_{12}^{}$.

Note that
$x^{}_{i^{}_{0}j^{}_{0}}=\sum^{4}_{i=1}\lambda¡­^{}_{i}u^{}_{i}$ for
some $u^{}_{i}\in \mathscr{U}(N)$ and $\lambda^{}_{i}\in\mathbb{C}$.
For the sake of simplicity, we can write $X_{12}$ in the form
$$x^{}_{i^{}_{0}j^{}_{0}}\oplus 0^{(k-1)}
=\sum^{4}_{i=1}\frac{\lambda^{}_{i}}{2}(u^{}_{i}\oplus
v^{(k-1)}+u^{}_{i}\oplus (-v)^{(k-1)}),$$ where $v$ is unitary in
${N}$. Write $\widetilde{u}^{}_{i}=u^{}_{i}\oplus v^{(k-1)}$ and
$\widehat{u}^{}_{i}=u^{}_{i}\oplus (-v)^{(k-1)}$, then $x$ can be
written in the form
$$x=\sum^{4}_{i=1}\frac{\lambda^{}_{i}}{2}\left(\begin{pmatrix}
0&\widetilde{u}^{}_{i}\\
I^{}_{X^{}_{21}}&0\\
\end{pmatrix}+\begin{pmatrix}
0&\widehat{u}^{}_{i}\\
-I^{}_{X^{}_{21}}&0\\
\end{pmatrix}\right),$$ where $I^{}_{X^{}_{21}}$ is the
identity of $\mathbb{M}^{}_{n-k}({N})$.

By a similar method, every $x$ in $M_{n}$ can be written as a linear
combination of finitely many unitary operators. Thus we finish the
proof.
\end{proof}

\begin{lemma}
If $N$ is a von Neumann algebra, $A\subseteq N$ is a von Neumann
subalgebra and $N\ominus
A=\overline{\mathrm{span}\{u:u\in\mathscr{U}(N\ominus
A)\}^{\mathrm{SOT}}}$, then
$$\mathbb{M}_{n}(N\ominus A)=
\overline{\mathrm{span}\{u:u\in\mathscr{U}( \mathbb{M}_{n}(N\ominus
A))\}^{\mathrm{SOT}}}.$$
\end{lemma}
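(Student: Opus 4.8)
The plan is to reduce the matrix statement to the single-entry case and then invoke the hypothesis on $N\ominus A$, paralleling the off-diagonal argument of Lemma~2.1 but now allowing nonzero diagonal entries. First I record that, by the entry-wise description of conditional expectations given before Lemma~2.1, $\mathbb{M}_{n}(N\ominus A)$ is exactly the set of $x\in\mathbb{M}_{n}(N)$ all of whose entries lie in $N\ominus A$. Since the hypothesis exhibits $N\ominus A$ as an SOT closure, $N\ominus A$ is SOT-closed; as each entry map $x\mapsto x^{}_{ij}$ is SOT-continuous, $\mathbb{M}_{n}(N\ominus A)$ is SOT-closed as well. This already gives the inclusion $\overline{\mathrm{span}\{u:u\in\mathscr{U}(\mathbb{M}_{n}(N\ominus A))\}}^{\mathrm{SOT}}\subseteq\mathbb{M}_{n}(N\ominus A)$, so only the reverse inclusion needs proof. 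I may assume $N\ominus A\neq\{0\}$ (otherwise both sides are $\{0\}$) and $n\geq 2$ (the case $n=1$ is the hypothesis itself); then the hypothesis supplies at least one unitary in $\mathscr{U}(N\ominus A)$.

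Next, write an arbitrary $x\in\mathbb{M}_{n}(N\ominus A)$ as a finite sum of single-entry matrices, each carrying a single entry $x^{}_{ij}\in N\ominus A$. By linearity it suffices to place into the SOT-closed span of $\mathscr{U}(\mathbb{M}_{n}(N\ominus A))$ each single-entry matrix $M(a)$ having $a\in N\ominus A$ in a fixed slot $(i^{}_{0},j^{}_{0})$ and zeros elsewhere. The hypothesis yields a net of finite linear combinations of unitaries in $\mathscr{U}(N\ominus A)$ converging to $a$ in SOT; since the map $a\mapsto M(a)$ is SOT-continuous, it is enough to realize $M(u)$, for each $u\in\mathscr{U}(N\ominus A)$, as a finite linear combination of unitaries lying in $\mathbb{M}_{n}(N\ominus A)$.

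The decisive construction handles this. Fix a bijection $\rho$ from $\{1,\dots,n\}\setminus\{j^{}_{0}\}$ onto $\{1,\dots,n\}\setminus\{i^{}_{0}\}$ and choose unitaries $v^{}_{j}\in\mathscr{U}(N\ominus A)$ for $j\neq j^{}_{0}$. Let $U$ be the monomial matrix with $u$ in position $(i^{}_{0},j^{}_{0})$ and $v^{}_{j}$ in position $(\rho(j),j)$ for each $j\neq j^{}_{0}$, and let $U'$ be obtained from $U$ by replacing every $v^{}_{j}$ with $-v^{}_{j}$ while keeping the entry $u$ fixed. Both $U$ and $U'$ are monomial matrices with unitary entries taken from $N\ominus A$, hence elements of $\mathscr{U}(\mathbb{M}_{n}(N\ominus A))$, and because $n\geq 2$ provides at least one off-entry to cancel, $\tfrac{1}{2}(U+U')=M(u)$. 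Combining this with the reduction above places $x$ in $\overline{\mathrm{span}\{u:u\in\mathscr{U}(\mathbb{M}_{n}(N\ominus A))\}}^{\mathrm{SOT}}$, giving the desired equality.

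I expect the monomial-matrix construction to be the only real obstacle: one must isolate a single, possibly diagonal, entry while forcing every remaining entry to stay inside $N\ominus A$ and simultaneously keeping the matrix unitary. The hypothesis enters precisely here, guaranteeing the filler unitaries $v^{}_{j}\in\mathscr{U}(N\ominus A)$; without a unitary available in $N\ominus A$ the diagonal entries could not be completed to a unitary matrix, which is exactly what separates this lemma from the diagonal-free setting of Lemma~2.1. The remaining ingredients, namely linearity, SOT-continuity of $a\mapsto M(a)$, and SOT-closedness of $\mathbb{M}_{n}(N\ominus A)$, are routine.
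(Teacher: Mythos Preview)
Your proof is correct and follows essentially the same approach as the paper: both reduce to single-entry matrices, approximate the entry by finite sums of unitaries from $\mathscr{U}(N\ominus A)$, and then realize a single-entry matrix carrying a unitary as $\tfrac{1}{2}(U+U')$ by filling the remaining rows and columns with $\pm v$ for some $v\in\mathscr{U}(N\ominus A)$. The only cosmetic difference is that the paper first permutes any entry to the $(1,1)$ slot and then uses diagonal fillers, whereas you handle an arbitrary position directly with a monomial matrix; the underlying idea is identical.
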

\begin{proof}
For each $x\in N\ominus A$, there exists a sequence
$\{x_{n}\}_{n\in\Lambda}$ in $N\ominus A$, such that
$x_{n}\xrightarrow{\mathrm{SOT}}x$,
$x_{n}=\sum^{k^{}_{n}}_{i=1}\lambda_{n_{i}}u_{n_{i}}$, $u_{n_{i}}\in
\mathscr{U}(N\ominus A)$, $\lambda_{n_{i}}\in \mathbb{C}$. Without
loss of generality and for the sake of simplicity, we may assume
$$\tilde{x}=\left(\begin{array}{cccc}
x&0&\ldots&0\\
0&0&\ldots&0\\
\vdots&\vdots&\ddots&\vdots\\
0&0&\ldots&0\\
\end{array}
\right),\quad\tilde{x}_{n}=\left(\begin{array}{cccc}
x_{n}&0&\ldots&0\\
0&0&\ldots&0\\
\vdots&\vdots&\ddots&\vdots\\
0&0&\ldots&0\\
\end{array}
\right).$$ Note that $x$ can be moved to the $(i,j)$ entry by
multiplying $u_{i}$ on the left and $u_{j}$ on the right, where
$u_{i}$ (resp.~$u_{j}$) is the elementary matrix obtained by
swapping row $1$(resp. column $1$) and row $i$ (resp.~column $j$) of
the identity matrix for $1\leq i,j\leq n$.

Then the result
$\tilde{x}\in\overline{\mathrm{span}\{u:u\in\mathscr{U}(N\ominus
A)\}^{\mathrm{SOT}}}$ follows from the two relations
$$\tilde{x}_{n}\xrightarrow{\mathrm{SOT}}\tilde{x},$$
$$
\tilde{x}^{}_{n}= \sum^{k^{}_{n}}_{i=1}\dfrac{\lambda^{}_{n_{i}}}{2}
\left(\left(\begin{array}{cccc}
u_{n_{i}}\\
&v\\
&&\ddots\\
&&&v\\
\end{array}
\right)+\left(\begin{array}{cccc} u_{n_{i}}\\
&-v\\
&&\ddots\\
&&&-v\\
\end{array}\right)\right),$$
where $v$ is a unitary operator in $\mathscr{U}(N\ominus A)$.
\end{proof}

\begin{lemma}
If $N$ is a type $\mathrm{II}^{}_{1}$ factor with a faithful normal
normalized trace $\tau$  and $A \subseteq N$ is a diffuse abelian
von Neumann subalgebra, then $$N\ominus A=\mathrm{span}\{u:u\in
\mathscr{U}(N \ominus A)\}.$$
\end{lemma}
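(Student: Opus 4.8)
The plan is to split an arbitrary $x\in N\ominus A$ into an ``off-diagonal'' part, which Lemma~2.1 disposes of immediately, and a ``diagonal'' part, which is the real content. Since $A$ is diffuse I may, for any $n$, choose mutually orthogonal projections $p_1,\dots,p_n\in A$ of trace $1/n$ with $\sum_i p_i=1$; as $N$ is a factor these are equivalent, so there is a system of matrix units $\{e_{ij}\}\subseteq N$ with $e_{ii}=p_i$ and an identification $N\cong\mathbb{M}_n(p_1Np_1)$. The decisive point of cutting by projections \emph{from} $A$ is that $E_A(p_ixp_j)=p_iE_A(x)p_j$: for $i\neq j$ this vanishes because $p_i,p_j$ are orthogonal and abelian, so \emph{every} off-diagonal block automatically lies in $N\ominus A$; moreover $E_A(x)=0$ is equivalent to $E_{Ap_i}(p_ixp_i)=0$ for each $i$, since the $p_iE_A(x)p_i$ sit in orthogonal corners. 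After rescaling I also assume $\|x\|\le 1$, so every block is a contraction, and I reduce to $x$ self-adjoint by treating the real and imaginary parts separately (both remain in $N\ominus A$).

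For the off-diagonal part $\sum_{i\neq j}p_ixp_j$ I apply Lemma~2.1 in the picture $N\cong\mathbb{M}_n(p_1Np_1)$: having zero diagonal, it is a finite linear combination of unitaries with vanishing diagonal blocks, and any such unitary $u$ satisfies $E_A(u)=\sum_iE_{Ap_i}(p_iup_i)=0$, hence lies in $\mathscr U(N\ominus A)$. Thus the off-diagonal part is already a finite combination of unitaries from $N\ominus A$, and everything reduces to the diagonal $d=\sum_i p_ixp_i$ with $E_{Ap_i}(p_ixp_i)=0$.

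The diagonal part is the crux. Each corner $p_iNp_i$ is again a $\mathrm{II}_1$ factor containing the diffuse abelian $Ap_i$, and $d_i=p_ixp_i$ satisfies the same hypothesis one level down, so merely refining the partition only reproduces the problem. My intended mechanism is to realize a self-adjoint block $d_i$ as a corner of a self-adjoint \emph{unitary} in $N\ominus A$ by dilating across an equivalent ancilla projection $q\in A$ with $q\perp p_i$, using the identity $\tfrac12\bigl(U-U'\bigr)$ with $U=\bigl(\begin{smallmatrix} d_i & s\\ s & \beta\end{smallmatrix}\bigr)$ and $U'=\bigl(\begin{smallmatrix} -d_i & s\\ s & -\beta\end{smallmatrix}\bigr)$ to peel off a $\pm d_i$ corner. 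The naive equal-sized dilation forces $\beta$ to be the transport $-ed_i^{*}e^{*}$, whose $Aq$-expectation need not vanish: if $A$ is a singular MASA then no partial isometry of $N$ carries $Ap_i$ onto $Aq$ (otherwise $e+e^{*}$ would normalise $A$ nontrivially), so the transported corner genuinely leaves $N\ominus A$. To get around this I plan to take the ancilla strictly larger, exploiting the room furnished by diffuseness: the isometry encoding $d_i$ then does not exhaust $q$, and on the leftover subprojection I may insert a free zero-expectation unitary of $qNq$ (one exists because $Aq$ is diffuse: split $q$ into two equal $A$-projections and take a partial-isometry symmetry exchanging them) chosen so that the \emph{entire} $q$-corner has vanishing $Aq$-expectation.

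Assembling the two parts would then write $x$ as a finite linear combination of unitaries in $N\ominus A$, the reverse inclusion $\mathrm{span}\{u:u\in\mathscr U(N\ominus A)\}\subseteq N\ominus A$ being immediate from linearity of $E_A$. I expect essentially all the difficulty to lie in this diagonal step carried out \emph{exactly}: the statement asserts the plain algebraic span, with no $\mathrm{SOT}$-closure, so the ancilla-corner expectation must be annihilated on the nose and the auxiliary entries produced by the dilation must be made to cancel when the blocks are recombined. It is precisely the possible singularity of $A$ that rules out the one-line dilation and makes indispensable the freedom — enlarging the ancilla and inserting a free zero-expectation unitary — that diffuseness of $A$ provides.
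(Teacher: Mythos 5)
Your overall skeleton (cut by projections taken from $A$, dispose of the off-diagonal part with Lemma~2.1, reduce to self-adjoint contractions, dilate the diagonal blocks to unitaries) is the same as the paper's, and you correctly isolate the real obstruction: in the naive $2\times 2$ dilation the ancilla corner carries a transported copy $-vd_iv^{*}$ of $d_i$ on its diagonal, and its $Aq$-expectation has no reason to vanish. But the mechanism you propose to remove it does not work as described. If the isometry carrying $d_i$ lands on $q'\leq q$ and you place a unitary $w$ with $E_{Aq}(w)=0$ on $q-q'$, the $q$-corner of your dilation is $-vd_iv^{*}+w$, whose $Aq$-expectation is $-E_{Aq}(vd_iv^{*})$ --- exactly the quantity you started out unable to control; a zero-expectation $w$ cancels nothing, and prescribing instead $E_{Aq}(w)=E_{Aq}(vd_iv^{*})$ for a \emph{unitary} $w$ is a new problem at least as hard as the original. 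Since you yourself flag the diagonal step as the crux and leave it as a plan, this is a genuine gap, not a detail.

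The paper's device closes it exactly and is worth contrasting with yours. Using diffuseness, split $1$ into four equal projections $e_1,\dots,e_4\in A$, identify $N\cong\mathbb{M}_4(M)$ with $A\cong\bigoplus_{i}A_i$ diagonal, and dilate the pair of diagonal blocks $x_1\in M\ominus A_1$, $x_2\in M\ominus A_2$ \emph{simultaneously}: the unitary $\widetilde u_1$ acts as the standard rotation dilation from $\mathrm{ran}(e_1+e_3)$ onto $\mathrm{ran}(e_1+e_4)$ and from $\mathrm{ran}(e_2+e_4)$ onto $\mathrm{ran}(e_2+e_3)$, so the transported copies of $x_1$ and $x_2$ land in the off-diagonal matrix positions $(4,3)$ and $(3,4)$. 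Because the $e_k$ lie in the abelian algebra $A$, off-diagonal entries have zero $A$-expectation automatically, and the diagonal of $\widetilde u_1,\widetilde u_2$ is $(x_1,x_2,0,0)$, which $E_A$ kills by hypothesis; then $\widetilde x=\tfrac12\widetilde u_1+\tfrac12\widetilde u_2-\widetilde u_3$ with $\widetilde u_3$ strictly off-diagonal and hence covered by Lemma~2.1. In short, the missing idea is not to cancel the expectation of the transported block but to permute the ancilla corners so that the transported block never sits on the diagonal at all.
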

\begin{proof}
Since $N$ is a type $\mathrm{II}^{}_{1}$ factor, there exist four
equivalent mutually orthogonal projections $\{e_{i}\}_{1\leq i\leq
4}\subseteq A$, such that $\sum^{4}_{i=1}e_{i}=1$. Denote by
$M$ the reduced von Neumann algebra $e^{}_{1}Ne^{}_{1}$. Then there exists a $*$-isomorphism
$\varphi$ from $N$ onto $\mathbb{M}_{4}(M)$  so that
$\varphi(A)=\bigoplus^{4}_{i=1}A_{i}$, where $A^{}_{i}$ is a diffuse
abelian von Neumann subalgebra in $M$. For the
sake of simplicity, we assume $N=\mathbb{M}_{4}(M)$,
$A=\bigoplus^{4}_{i=1}A_{i}$.

Denote by $M_{4}=\{(x^{}_{ij})^{}_{1\leq i,j\leq4}\in N:x^{}_{ii}=0~
\textmd{ for  }1\leq i\leq 4 \}$, then
$$M_{4}=\mathrm{span}\{u:u\in\mathscr{U}(M_{4})\}$$ following from
Lemma $2.1$. Thus we only need to prove
\begin{equation}
\bigoplus_{i=1}^{4}M\ominus
A_{i}\subseteq\mathrm{span}\{u:u\in \mathscr{U}(N \ominus
A)\},
\end{equation} since $M_{4}\subseteq N\ominus A$.

Consider the matrix
$$\widetilde{x}=\begin{pmatrix}
x^{}_{1}&0&0&0\\
0& x^{}_{2}&0&0\\
0&0& 0&0\\
0&0&0& 0\\
\end{pmatrix},$$ where $x^{}_{1}=x_{1}^{*}\in M\ominus A_{1}$,
$x^{}_{2}=x_{2}^{*}\in M\ominus A_{2}$, $\| x^{}_{1}\|<1$, $\|
x^{}_{2}\|<1$. Since for each $y\in N$, we have
$y=\lambda^{}_{1}y^{}_{1}+\lambda^{}_{2}y^{}_{2}$, where
$y^{}_{i}=y^{*}_{i}$, $\lambda^{}_{i}\in\mathbb{C}$, $\|
y^{}_{i}\|<1$, for $1\leq i\leq2$. Let
$$\widetilde{u}^{}_{1}=\begin{pmatrix}
x^{}_{1}&0&\sqrt{1-x_{1}^{2}}&0\\
0& x^{}_{2}&0&\sqrt{1-x_{2}^{2}}\\
0&-\sqrt{1-x_{2}^{2}}& 0&x^{}_{2}\\
-\sqrt{1-x_{1}^{2}}&0&x^{}_{1}& 0\\
\end{pmatrix},$$
$$\widetilde{u}^{}_{2}=\begin{pmatrix}
x^{}_{1}&0&\sqrt{1-x_{1}^{2}}&0\\
0&x^{}_{2}&0&\sqrt{1-x_{2}^{2}}\\
0&\sqrt{1-x_{2}^{2}}&0&-x^{}_{2}\\
\sqrt{1-x_{1}^{2}}&0&-x^{}_{1}&0\\
\end{pmatrix},$$
$$\widetilde{u}^{}_{3}=\begin{pmatrix}
0&0&\sqrt{1-x_{1}^{2}}&0\\
0&0&0&\sqrt{1-x_{2}^{2}}\\
0&0&0&0\\
0&0&0&0\\
\end{pmatrix},$$
then we obtain that
\begin{equation}
\widetilde{x}=\tfrac{1}{2}\widetilde{u}^{}_{1}+\tfrac{1}{2}\widetilde{u}^{}_{2}-\widetilde{u}^{}_{3}.
\end{equation} Notice that
$\widetilde{u}^{}_{1},\widetilde{u}^{}_{2}$ are unitary operators in
$N\ominus A$ and $\widetilde{u}^{}_{3}$ belongs to $M_{4}$, then
$(2.2)$ and Lemma $2.1$ allow us to conclude that $$\tilde{x}\in
\mathrm{span}\{u:u\in \mathscr{U}(N \ominus A)\}.$$

Similarly, we can also show that
$$\begin{pmatrix}
0&0&0&0\\
0&0&0&0\\
0&0&x^{}_{3}&0\\
0&0&0&x^{}_{4}\\
\end{pmatrix}
$$ is a linear combination of finitely
many unitary operators in $N\ominus A$, where $x^{}_{3}\in M\ominus
A_{3}$, $x^{}_{4}\in M\ominus A_{4}$, thus we finish the proof of
$(2.1)$.
\end{proof}

\begin{lemma}
If $N$ is a type $\mathrm{II}^{}_{1}$ factor with a faithful normal
normalized trace $\tau$ and $A \subseteq N$ is an atomic abelian von
Neumann subalgebra, then $$N\ominus
A=\overline{\mathrm{span}\{u:u\in \mathscr{U}(N \ominus
A)\}^{\mathrm{SOT}}}.$$
\end{lemma}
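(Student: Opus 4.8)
The plan is to reduce to the diffuse case already settled in Lemma $2.3$ by interposing a diffuse abelian algebra, and then to dispose of the remaining piece by an explicit construction with symmetries (self-adjoint unitaries). Since $A$ is atomic abelian, write $A=\bigoplus^{}_{i}\mathbb{C}p^{}_{i}$ with $\{p^{}_{i}\}$ the atoms and $\sum^{}_{i}p^{}_{i}=1$. Each reduced algebra $M^{}_{i}=p^{}_{i}Np^{}_{i}$ is again a type $\mathrm{II}^{}_{1}$ factor, hence contains a diffuse abelian von Neumann subalgebra $B^{}_{i}$; put $B=\bigoplus^{}_{i}B^{}_{i}$. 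Then $B$ is a diffuse abelian subalgebra of $N$ with $A\subseteq B$, and by the tower property $E^{}_{A}=E^{}_{A}E^{}_{B}$ one gets $N\ominus B\subseteq N\ominus A$, so that $\mathscr{U}(N\ominus B)\subseteq\mathscr{U}(N\ominus A)$. For $x\in N\ominus A$ I would split $x=(x-E^{}_{B}(x))+E^{}_{B}(x)$; here $x-E^{}_{B}(x)\in N\ominus B$, while $E^{}_{B}(x)\in B\ominus A$ because $E^{}_{A}(E^{}_{B}(x))=E^{}_{A}(x)=0$.

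The first summand is easy: applying Lemma $2.3$ to the diffuse abelian $B$ gives $x-E^{}_{B}(x)\in\mathrm{span}\{u:u\in\mathscr{U}(N\ominus B)\}\subseteq\mathrm{span}\{u:u\in\mathscr{U}(N\ominus A)\}$, with no closure needed. It remains to treat $b:=E^{}_{B}(x)\in B\ominus A$, which after splitting into real and imaginary parts I may assume self-adjoint. Writing $b=\bigoplus^{}_{i}b^{}_{i}$ with $b^{}_{i}\in B^{}_{i}$ self-adjoint and $\tau(b^{}_{i})=0$, I would first use the spectral theorem in the abelian $B$ to approximate $b$ in the strong operator topology by elements supported on finitely many atoms whose $i$-th component $b^{}_{i}{}'$ has finite spectrum and $\tau(b^{}_{i}{}')=0$. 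Since $M^{}_{i}$ has continuous dimension, such a finite-spectrum trace-zero element is a finite real-linear combination of balanced differences $q-q'$ with $q,q'$ orthogonal projections of equal trace (hence $q\sim q'$) inside $M^{}_{i}$.

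Finally, each balanced difference $q-q'$ supported in the corner $i$ can be realized as
$$q-q'=\tfrac{1}{4}\sum^{}_{\varepsilon,\varepsilon'\in\{\pm1\}}U^{}_{\varepsilon,\varepsilon'},$$
where $U^{}_{\varepsilon,\varepsilon'}$ is the operator equal to $(q-q')+\varepsilon(r-r')$ on $p^{}_{i}$ (with $p^{}_{i}-q-q'=r+r'$, $r\sim r'$) and equal to $\varepsilon'\bigoplus^{}_{j\neq i}(r^{}_{j}-r^{}_{j}{}')$ on the remaining corners (with $p^{}_{j}=r^{}_{j}+r^{}_{j}{}'$, $r^{}_{j}\sim r^{}_{j}{}'$). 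Each $U^{}_{\varepsilon,\varepsilon'}$ is a self-adjoint unitary whose compression to every atom has trace $0$, so $U^{}_{\varepsilon,\varepsilon'}\in\mathscr{U}(N\ominus A)$; averaging over the four signs cancels the auxiliary terms and leaves $q-q'$. Thus $b\in\overline{\mathrm{span}\{u:u\in\mathscr{U}(N\ominus A)\}}^{\mathrm{SOT}}$, and combined with the first summand this gives $x$ in the same set. The reverse inclusion is immediate, since $N\ominus A$ is strong-operator closed.

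The main obstacle is the last step. Unlike in Lemma $2.3$, a single atom cannot be handled in isolation: forcing the desired data onto one corner while putting the identity elsewhere destroys the defining condition $E^{}_{A}(\cdot)=0$ on the other corners. The device of filling every other corner with a trace-zero symmetry and averaging over the four sign choices is precisely what respects the constraint on all corners simultaneously, and the strong-operator closure enters only to pass from finite-spectrum approximants to a general trace-zero element of $B\ominus A$.
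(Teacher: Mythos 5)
Your argument is correct in outline, and it takes a genuinely different route from the paper. The paper proves this lemma by a four-case analysis on the number of atoms: for $A=\mathbb{C}1$ it decomposes a $2\times 2$ matrix over a corner into a diagonal part handled by the unitary dilation $\bigl(\begin{smallmatrix}x&\sqrt{1-x^2}\\-\sqrt{1-x^2}&x\end{smallmatrix}\bigr)$ plus an off-diagonal part handled by Lemma 2.1; for two atoms it must treat the off-diagonal corner $pNq$ separately, splitting into the cases $\tau(p)$ rational (an $(m+n)\times(m+n)$ matrix model) and $\tau(p)$ irrational (approximation by rational subprojections); then it assembles finitely and countably many atoms. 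Your proof instead interposes a diffuse abelian algebra $B=\bigoplus_i B_i$ with $A\subseteq B$ and splits $x=(x-E_B(x))+E_B(x)$: the first summand, which contains all the off-diagonal corners $p_iNp_j$, is absorbed in one stroke by Lemma 2.3 applied to $B$, so the rational/irrational dichotomy disappears entirely; the residual piece $E_B(x)\in B\ominus A$ is handled by your averaging over the four sign choices of trace-zero symmetries, which correctly respects the constraint $E_A(\cdot)=0$ on every corner simultaneously (your closing remark about why a single atom cannot be treated in isolation is exactly the right point, and is the same difficulty the paper's Lemma 2.2-style padding with $\pm v$ is designed to address). What your approach buys is brevity and the elimination of the trace-arithmetic case analysis; what the paper's approach buys is self-containedness at the level of explicit matrix units, without invoking the existence of a diffuse masa in each corner.

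One step is stated too strongly and needs a small patch: it is not true that every trace-zero self-adjoint element with finite spectrum in $M_i$ is a \emph{finite} real-linear combination of balanced differences $q-q'$ with $\tau(q)=\tau(q')$ if you insist on using its spectral projections --- the matching of coefficients against traces runs a Euclidean algorithm that terminates only when the trace ratios are rational. The fix is cheap: since $B_i$ is diffuse, choose the finite-spectrum approximants $b_i'$ to be conditional expectations of $b_i$ onto finite subalgebras of $B_i$ generated by $n$ projections of equal trace $\tau(p_i)/n$; then $b_i'=\sum_k\lambda_k e_k$ with $\sum_k\lambda_k=0$, so $b_i'=\sum_{k\geq 2}\lambda_k(e_k-e_1)$ is manifestly a finite combination of balanced differences, and the rest of your argument goes through unchanged.
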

\begin{proof}
We now consider four cases respectively.

\noindent $(\rm{i})$ $A=\mathbb{C}1$.

Since $N$ is a type $\mathrm{II}^{}_{1}$ factor, there exist two
equivalent mutually orthogonal projections $p$ and $q$ in $N$ such
that $p+q=1$. Denote by $M$ the reduced von Neumann algebra $pNp$
with a faithful normal normalized trace $\tau^{}_{M}$. Then there
exists a $*$-isomorphism $\varphi$ from $N$ onto $\mathbb{M}_{2}(M)$
so that $\varphi(A)=\mathbb{C}p_{}^{(2)}$. If we write
$\widetilde{N}=\mathbb{M}_{2}(M )$, $\widetilde{A}=\varphi(A)$, then
we obtain
$$\widetilde{N}\ominus\widetilde{A}=\{\tilde{x}\in\widetilde{N}:
\tau^{}_{M}(x_{11}+x_{22})=0,\tilde{x}=(x_{ij})_{1\leq i,j\leq 2}\}.$$
Note that
\begin{equation}
\begin{pmatrix}
x^{}_{11}&x^{}_{12}\\
x^{}_{21}&x^{}_{22}\\
\end{pmatrix}=\begin{pmatrix}
x^{}_{11}+x^{}_{22}&0\\
0&0\\
\end{pmatrix}+\begin{pmatrix}
-x^{}_{22}&0\\
0&x^{}_{22}\\
\end{pmatrix}+\begin{pmatrix}
0&x^{}_{12}\\
x^{}_{21}&0\\
\end{pmatrix}.\end{equation}
For each $x^{}_{22}\in M$,
$x^{}_{22}=\sum_{i=1}^{4}\lambda^{}_{i}u^{}_{i}$, where
$u^{}_{1},\ldots,u^{}_{4}$ are unitary operators in $M$  so that
$$\begin{pmatrix}
-x^{}_{22}&0\\
0&x^{}_{22}\\
\end{pmatrix}=\sum_{i=1}^{4}\lambda^{}_{i}\begin{pmatrix}
-u^{}_{i}&0\\
0&u^{}_{i}\\
\end{pmatrix}.$$
Since $\begin{pmatrix}
-u^{}_{i}&0\\
0&u^{}_{i}\\
\end{pmatrix}\in \widetilde{N}\ominus\widetilde{A}$, we have
\begin{equation}
\begin{pmatrix}
-x^{}_{22}&0\\
0&x^{}_{22}\\
\end{pmatrix}\in \mathrm{span}\{u:u\in\mathscr{U}
(\widetilde{N}\ominus\widetilde{A})\}.
\end{equation}
Denote by
$$M_{2}=\left\{\begin{pmatrix}
0&x^{}_{12}\\
x^{}_{21}&0\\
\end{pmatrix}:x^{}_{12},x^{}_{21}\in M\right\}.$$
Since $M_{2}\subseteq \widetilde{N}\ominus\widetilde{A}$, by Lemma
$2.1$ we obtain
\begin{equation}
M_{2}\subseteq\mathrm{span}
\{u:u\in\mathscr{U}(\widetilde{N}\ominus\widetilde{A})\}.
\end{equation}

For $x\in M$ and $\tau^{}_{M}(x)=0$, we may assume $x=x^{*}$,
$\|x\|< 1$, since
$$x=\tfrac{x+x_{}^{*}}{2}+i\tfrac{x-x_{}^{*}}{2i} \mbox{~~and~~}
\tau^{}_{M}(\tfrac{x+x_{}^{*}}{2})=\tau^{}_{M}(\tfrac{x-x_{}^{*}}{2i})=0.$$
Let
$$\tilde{x}=\begin{pmatrix}
x&0\\
0&0\\
\end{pmatrix},$$
$$\tilde{u}^{}_{1}=\begin{pmatrix}
x&\sqrt{1-x^{2}}\\
-\sqrt{1-x^{2}}&x\\
\end{pmatrix}, \tilde{u}^{}_{2}=\begin{pmatrix}
x&\sqrt{1-x^{2}}\\
\sqrt{1-x^{2}}&-x\\
\end{pmatrix},\tilde{u}^{}_{3}=\begin{pmatrix}
0&\sqrt{1-x^{2}}\\
0&0\\
\end{pmatrix},$$
then we have
\begin{equation}
\tilde{x}=\tfrac{1}{2}\tilde{u}^{}_{1}+
\tfrac{1}{2}\tilde{u}^{}_{2}-\tilde{u}^{}_{3}.
\end{equation}
Observe that $\tilde{u}^{}_{1},\tilde{u}^{}_{2}$ are both unitary in
$\widetilde{N}\ominus\widetilde{A}$ and $\tilde{u}^{}_{3}\in M_{2}$.

Hence
$\widetilde{N}\ominus\widetilde{A}=\mathrm{span}\{u:u\in\mathscr{U}(\widetilde{N}\ominus\widetilde{A})\}$
follows from $(2.3),(2.4),(2.5)$ and $(2.6)$.

\noindent $(\rm{ii})$ $A=\mathbb{C}p+\mathbb{C}q$, where $p$ and $q$ are
mutually orthogonal projections in $N$ with sum $1$.

Each $x\in$ $N\ominus A$ can be written as
$$x=\begin{pmatrix}
x^{}_{11}&x^{}_{12}\\
x^{}_{21}&x^{}_{22}\\
\end{pmatrix}
$$
with respect to the decomposition $1=p+q$, where $x^{}_{11}\in
p(N\ominus A)p$, $x^{}_{12}\in pNq$, $x^{}_{21}\in qNp$,
$x^{}_{22}\in q(N\ominus A)q$. By $\rm{(i)}$, we obtain that
$$pNp\ominus \mathbb{C}p=\mathrm{span}\{u:u\in\mathscr{U}(pNp\ominus
\mathbb{C}p)\},$$
$$qNq\ominus\mathbb{C}q=\mathrm{span}
\{u:u\in\mathscr{U}(qNq\ominus\mathbb{C}q)\},$$ therefore $$\begin{pmatrix}
x^{}_{11}&0\\
0&x^{}_{22}\\
\end{pmatrix}\in \mathrm{span}\{u:u\in\mathscr{U}(N\ominus A)\}.$$
We only need to prove
\begin{equation}
\begin{pmatrix}
0&x^{}_{12}\\
x^{}_{21}&0\\
\end{pmatrix}\in \mathrm{span}\{u:u\in\mathscr{U}(N\ominus
A)\}.\end{equation}

If $\tau(p)$ is rational, then we assume
$$\frac{\tau(p)}{\tau(q)}=\frac{m}{n},\mbox{ for some } m, n \in\mathbb{N^{+}}.$$
Let $\{p^{}_{i}\}_{1\leq i\leq m}$ and $\{q^{}_{j}\}_{1\leq j\leq
n}$ be two families of mutually orthogonal projections in $N$ such
that $p^{}_{1}+p^{}_{2}+\cdots+p^{}_{m}=p$,
$q^{}_{1}+q^{}_{2}+\cdots+q^{}_{n}=q$ and
$\tau(p^{}_{i})=\tau(q^{}_{j})$, for all $1\leq i\leq m$, $1\leq j
\leq n$. Denote by $M=p^{}_{1}Np^{}_{1}$ with a faithful normal
normalized trace $\tau^{}_{M}$, then there exists a $*$-isomorphism
$\varphi$  from $N$ onto $\mathbb{M}_{m+n}(M)$ so that
$\varphi(A)=\mathbb{C}p^{(m)}_{1}\oplus \mathbb{C}p^{(n)}_{1}$.
Denote by $\widetilde{N}=\varphi(N)$, $\widetilde{A}=\varphi(A)$,
$$N^{}_{0}=\left\{\begin{pmatrix}
0&x^{}_{12}\\
x^{}_{21}&0\\
\end{pmatrix}: x^{}_{12}\in pNq,~x^{}_{21}\in qNp\right\},$$
$$M_{m+n}=\{\tilde{x}\in \mathbb{M}^{}_{m+n}(M):
x^{}_{ii}=0,\textmd{for }1\leq i\leq m+n\}.$$ Note that
$$\widetilde{N}\ominus\widetilde{A}=\{(x^{}_{ij}) ^{}_{ 1\leq
i,j\leq m+n}\in
\widetilde{N}:\sum^{m}_{i=1}\tau^{}_{M}(x^{}_{ii})=0,\sum^{m+n}_{i=m+1}\tau^{}_{M}(x^{}_{ii})=0\},$$
then $\varphi(N^{}_{0})\subseteq M_{m+n}\subseteq
\widetilde{N}\ominus\widetilde{A}$. Then applying Lemma $2.1$ to
$M_{m+n}$, we obtain that
$$M_{m+n}=\mathrm{span}\{u:u\in\mathscr{U}(M_{m+n})\},$$ so that
$$\varphi(N^{}_{0})\subseteq\mathrm{span}\{u:u\in\mathscr{U}(\widetilde{N}\ominus\widetilde{A})\},$$
thus $(2.7)$ holds.

If $\tau(p)$ is irrational, then let $\{p^{}_{n}\}_{n\in
\Lambda},\{q^{}_{n}\}_{n\in\Lambda}$ be two families of increasing
subprojections of $p$ and $q$ respectively such that
$\tau(p^{}_{n})\rightarrow \tau(p)$, $\tau(q^{}_{n})\rightarrow
\tau(q)$ and for all $n\in \Lambda$, both $\tau(p^{}_{n})$ and
$\tau(q^{}_{n})$ are rational. Thus for $n\in \Lambda$, $x\in N$, we
have
$$p^{}_{n}xq^{}_{n} \xrightarrow{\mathrm{SOT}} pxq.$$ Next we show
that
\begin{equation}
p^{}_{n}xq^{}_{n}\in\mathrm{span}\{u:u\in\mathscr{U}(N\ominus
A)\}. \end{equation}

For each $n\in \Lambda$, suppose
$$\frac{\tau(p_{n})}{\tau(q_{n})}=\frac{k^{}_{n}}{l^{}_{n}} \mbox{ with } k^{}_{n},
l^{}_{n}\in\mathbb{N^{+}}.$$
Let $\{p^{}_{n_{i}}\}_{1\leq i \leq k^{}_{n} }$ and
$\{q^{}_{n_{j}}\}_{1\leq j\leq l^{}_{n}}$ be two families of
mutually orthogonal equivalent projections in $ N $ such that
$p^{}_{n_{1}}+p^{}_{n_{2}}+\cdots+p^{}_{n_{k_{n}}}=p^{}_{n}$,
$q^{}_{n_{1}}+q^{}_{n_{2}}+\cdots+q^{}_{n_{l_{n}}}=q^{}_{n}$. Then
there exists a $*$-isomorphism $\varphi$ from $N$ onto $\varphi(N)$
such that
 $$\varphi((p^{}_{n}+q^{}_{n})N(p^{}_{n}+q^{}_{n}))=\mathbb{M}_{k^{}_{n}+l^{}_{n}}
 (p^{}_{n_{1}}Np^{}_{n_{1}})$$
and $\varphi|_{(1-p_{n}-q_{n})N(1-p_{n}-q_{n})}$ is the identity map.
Denote by
$$N^{}_{n_{0}}=\left\{\begin{pmatrix}
0&x\\
y&0\\
\end{pmatrix}: x\in p^{}_{n}Nq^{}_{n},~y\in q^{}_{n}Np^{}_{n}\right\},$$
$$ N_{1}=((p-p_{n})N(p-p_{n}))\ominus ((p-p_{n})A(p-p_{n})),$$
$$ N_{2}=((q-q_{n})N(q-q_{n}))\ominus ((q-q_{n})A(q-q_{n})),$$
$$M_{k^{}_{n}+l^{}_{n}}=\{\{x_{ij}\}^{}_{ 1\leq i,j\leq
k^{}_{n}+l^{}_{n}}\in
\mathbb{M}^{}_{k^{}_{n}+l^{}_{n}}(p^{}_{n_{1}}Np^{}_{n_{1}}):
x^{}_{ii}=0,\textmd{ for } 1\leq i\leq k^{}_{n}+l^{}_{n}\}.$$

By Lemma $2.1$ and Case $\rm{(i)}$, we have each operator in
$$M_{k^{}_{n}+l^{}_{n}}\oplus N_{1}\oplus N_{2}$$can be written as a
linear combination of finitely many unitary operators in this set.
Note that
$$\varphi(N^{}_{n_{0}})\subseteq M_{k^{}_{n}+l^{}_{n}}\mbox{  and  }
M_{k^{}_{n}+l^{}_{n}}\oplus N_{1}\oplus N_{2}\subseteq
\varphi(N)\ominus\varphi(A),$$ so that
$$\varphi(N^{}_{n_{0}})\oplus
N_{1}\oplus
N_{2}\subseteq\mathrm{span}\{u:u\in\mathscr{U}(\varphi(N)\ominus\varphi(A))\},$$
thus $(2.8)$ holds.

\noindent $\rm{(iii)}$ $A=\sum_{i=1}^{n}\mathbb{C}p^{}_{i}$, where
$\{p^{}_{i}\}_{1\leq i\leq n}$ is a family of mutually orthogonal
projections in $N$ with sum $1$.

Each $x\in$ $N\ominus A$ can be written as
$$x=\begin{pmatrix}
x^{}_{11}&x^{}_{12}&\ldots&x^{}_{1n}\\
x^{}_{21}&x^{}_{22}&\ldots&x^{}_{2n}\\
\vdots&\vdots&\ddots&\vdots\\
x^{}_{n1}&x^{}_{n2}&\ldots&x^{}_{nn}\\
\end{pmatrix}$$
with respect to the decomposition $1=\sum_{1\leq i\leq n}p^{}_{i}$,
where $x^{}_{ii}\in p^{}_{i}Np^{}_{i}\ominus \mathbb{C} p^{}_{i}$,
$x^{}_{ij}\in  p^{}_{i}N p^{}_{j}$, for $1\leq i\neq j\leq n$.

For $1\leq i<j \leq n$, denote by
$$P_{ij}=\bigoplus_{k\neq i,j;1 \leq k\leq n} p^{}_{k}Np^{}_{k}\ominus\mathbb{C}p^{}_{k},$$
$$N_{ij}=(p^{}_{i}+p^{}_{j})(N\ominus A)(p^{}_{i}+p^{}_{j}),$$ then
$$P_{ij}=\mathrm{span}\{u:u\in
\mathscr{U}( P_{ij})\},~~~~N_{ij}=\overline{\mathrm{span}\{u:u\in
\mathscr{U}( N_{ij})\}^{\mathrm{SOT}}}$$ following from Case
$\rm{(i)}$ and Case $\rm{(ii)}$, therefore
$$N_{ij}\oplus P_{ij}=\overline{\mathrm{span}\{u:u\in
\mathscr{U}(N_{ij}\oplus P_{ij})\}^{\mathrm{SOT}}}.$$

\noindent $\rm{(iv)}$  $A=\sum^{\infty}_{i=1}\mathbb{C}p^{}_{i}$,
where $\{p^{}_{i}\}^{\infty}_{i=1}$ is a family of  mutually
orthogonal projections in $A$ with sum $1$.

Denote by
$$q^{}_{i}=\sum^{i}_{k=1} p^{}_{k},$$
$$N_{i}=q^{}_{i}(N\ominus A)q^{}_{i},$$
$$P_{i}=\bigoplus_{i+1\leq k< \infty}
p^{}_{k}Np^{}_{k}\ominus\mathbb{C}p^{}_{k},$$ then the strong-operator closure of
$\bigcup^{\infty}_{i=1}N_{i}$ is $N\ominus A$. By Case $\rm{(i)}$ and Case $\rm{(iii)}$, we have
$$N_{i}\oplus P_{i}\subseteq\overline{\mathrm{span}\{u:u\in
\mathscr{U}(N \ominus A)\}^{\mathrm{SOT}}}.$$  Thus we finish the proof.
\end{proof}

\begin{lemma}
If $N$ is a type $\mathrm{II}_{1}$ factor and $A\subseteq N$ is an abelian von
Neumann subalgebra, then $N\ominus
A=\overline{\mathrm{span}\{u:u\in\mathscr{U}(N\ominus
A)\}^{\mathrm{SOT}}}$.
\end{lemma}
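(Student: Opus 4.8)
The plan is to reduce the general abelian case to the diffuse case (Lemma $2.3$) and the atomic case (Lemma $2.4$) by splitting $A$ along its continuous and discrete parts. Every abelian von Neumann algebra decomposes as $A=Ap\oplus A(1-p)$ for a projection $p\in A$ with $Ap$ diffuse and $A(1-p)$ atomic, and I may assume $0<p<1$, since otherwise Lemma $2.3$ or Lemma $2.4$ applies directly. Because $p\in A$ and $A$ is abelian, $E_{A}$ vanishes on $pN(1-p)$ and $(1-p)Np$ (for $y\in pN(1-p)$ one has $E_{A}(y)=pE_{A}(y)(1-p)=E_{A}(y)p(1-p)=0$) and restricts to the conditional expectations onto $Ap$ in the type $\mathrm{II}_{1}$ factor $pNp$ and onto $A(1-p)$ in $(1-p)N(1-p)$. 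Hence, writing $x\in N$ as a $2\times 2$ block matrix with respect to $1=p+(1-p)$, we have $x\in N\ominus A$ if and only if its $(1,1)$ entry lies in $pNp\ominus Ap$, its $(2,2)$ entry lies in $(1-p)N(1-p)\ominus A(1-p)$, and the off-diagonal entries are arbitrary. I treat the block-diagonal and block-off-diagonal contributions separately.

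For the diagonal part, Lemma $2.3$ gives $pNp\ominus Ap=\mathrm{span}\{u:u\in\mathscr{U}(pNp\ominus Ap)\}$ and Lemma $2.4$ gives the strong-operator version for $(1-p)N(1-p)\ominus A(1-p)$. A unitary $a$ of the corner $pNp\ominus Ap$ is only a partial isometry in $N$, so to return to $\mathscr{U}(N\ominus A)$ I fix one unitary $b\in\mathscr{U}((1-p)N(1-p)\ominus A(1-p))$, which exists since that corner complement is nonzero and, by Lemma $2.4$, is the strong-operator closure of the span of its unitaries. Then $a\oplus 0=\tfrac{1}{2}(a\oplus b)+\tfrac{1}{2}(a\oplus(-b))$, both summands being genuine unitaries in $N\ominus A$. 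The symmetric device, with a fixed $a\in\mathscr{U}(pNp\ominus Ap)$, handles $0\oplus x_{22}$ on each finite span approximating $x_{22}$; passing to strong-operator limits then places the entire block-diagonal part of $N\ominus A$ inside the strong-operator closed span of $\mathscr{U}(N\ominus A)$.

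The block-off-diagonal part is the crux. Any off-diagonal $y$ automatically satisfies $E_{A}(y)=0$, so $y\in N\ominus A$. If $\tau(p)=\tau(1-p)$ then $N\cong\mathbb{M}_{2}(pNp)$ and the off-diagonal elements form exactly the zero-diagonal set $M_{2}$ of Lemma $2.1$, which is contained in $N\ominus A$; Lemma $2.1$ then finishes. For the unbalanced case I follow the rational/irrational scheme of Lemma $2.4$ case $\mathrm{(ii)}$: I approximate $y$ in the strong-operator topology by its compressions $(f+g)y(f+g)$ with $f\le p$ a projection in $Ap$ and $g\le 1-p$ a finite sum of subprojections of atoms of $A(1-p)$, chosen of commensurable rational traces, and then refine $f$ into equal-trace projections inside $Ap$ and each atom into equal-trace subprojections of $N$, so that $(f+g)N(f+g)\cong\mathbb{M}_{D}(M)$. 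The key point is that, for such a refinement, a matrix with zero fine-diagonal lies in $N\ominus A$: the $pNp$-corner is annihilated by $E_{Ap}$ because the fine projections belong to $Ap$, and the $(1-p)N(1-p)$-corner is annihilated by $E_{A(1-p)}$ because each fine projection sits inside a single atom and the trace kills zero-diagonal blocks. Lemma $2.1$ then writes the compression as a finite combination of unitaries of $(f+g)N(f+g)$, which I complete to genuine unitaries of $N\ominus A$ by a block-diagonal filler on the discarded corner $(p-f)\oplus((1-p)-g)$, namely a direct sum of a Lemma $2.3$ complement-unitary on the diffuse remnant and trace-zero unitaries on each atomic remnant, keeping the filled operators in $\ker E_{A}$.

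The main obstacle I expect is exactly this last step. When the atoms of $A$ carry mutually incommensurable traces there is no common refinement realizing the off-diagonal part exactly, so one is forced to cut down to commensurable corners and argue by strong-operator approximation, and the compressed unitaries produced by Lemma $2.1$ must be completed across the discarded corner without destroying membership in $N\ominus A$. Keeping the fillers simultaneously unitary and in the kernel of $E_{A}$ — exploiting that the diffuse side admits complement-unitaries of every trace while the atomic side admits only trace-zero unitaries inside each individual atom — is the delicate bookkeeping that makes the argument go through, and it is where the asymmetry between Lemma $2.3$ and Lemma $2.4$ is genuinely used.
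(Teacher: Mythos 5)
Your proposal follows essentially the same route as the paper: split $A$ into its diffuse and atomic parts via a central projection $p$, dispose of the diagonal corners by Lemmas $2.3$ and $2.4$ (with the standard $a\oplus b$, $a\oplus(-b)$ completion), and handle the off-diagonal corner by the rational/irrational trace dichotomy, compressing to commensurable corners, applying Lemma $2.1$ to the zero-fine-diagonal matrices, and completing with a unitary filler from the discarded corners' complements. Your write-up is, if anything, more explicit than the paper's about why the refined projections must be chosen inside $A$ (respectively inside single atoms) so that the zero-diagonal matrices really lie in $\ker E_{A}$, but the argument is the same.
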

\begin{proof}
Since $A$ is an abelian von Neumann algebra, there exist two
mutually orthogonal projections $p,q\in A$ with sum $1$ such that
$pAp$ is a diffuse abelian von Neumann algebra with unit $p$ and
$qAq$ is an atomic abelian von Neumann algebra with unit $q$. Each
$x\in$ $N\ominus A$ can be written as
$$x=\begin{pmatrix}
x^{}_{11}&x^{}_{12}\\
x^{}_{21}&x^{}_{22}\\
\end{pmatrix}
$$
with respect to the decomposition $1=p+q$, where  $x^{}_{11}\in p(N\ominus A )p$, $x^{}_{12}\in pNq$,
$x^{}_{21}\in qNp$, $x^{}_{22}\in q(N\ominus A)q$.

Denote by $$N_{0}=\left\{\begin{pmatrix}
0&x^{}_{12}\\
x^{}_{21}&0\\
\end{pmatrix}:x^{}_{12}\in pNq,~~
x^{}_{21}\in qNp\right\}.$$ By Lemma $2.3$ and Lemma $2.4$, we only
need to prove
$$N_{0}\subseteq\overline{\mathrm{span}\{u:u\in\mathscr{U}(N\ominus
A)\}^{\mathrm{SOT}}}.$$

If $\tau(p)$ is rational, then the proof idea is the same with that we use in Case $\rm{(ii)}$
for $\tau(p)$ rational in Lemma $2.4$.

If $\tau(p)$ is irrational, then let $\{p^{}_{n}\}_{n\in
\Lambda}$ and $\{q^{}_{n}\}_{n\in\Lambda}$ be two families of increasing
subprojections of $p$ and $q$ respectively such that
$\tau(p^{}_{n})\rightarrow \tau(p)$, $\tau(q^{}_{n})\rightarrow
\tau(q)$ and for all $n\in \Lambda$, both $\tau(p^{}_{n})$ and
$\tau(q^{}_{n})$ are rational. Then for $n\in \Lambda$, $x\in N$, we have
$$p^{}_{n}xq^{}_{n} \xrightarrow{\mathrm{SOT}} pxq.$$ Next we show that
\begin{equation}
p^{}_{n}xq^{}_{n}\in\mathrm{span}\{u:u\in\mathscr{U}(N\ominus A)\}.
\end{equation}

For each $n\in \Lambda$, suppose
$$\frac{\tau(p_{n})}{\tau(q_{n})}=\frac{k^{}_{n}}{l^{}_{n}} \mbox{ with } k^{}_{n}, l^{}_{n}\in\mathbb{N^{+}}.$$
Let $\{p^{}_{n_{i}}\}_{1\leq i \leq k^{}_{n} }$ and $\{q^{}_{n_{j}}\}_{1\leq
j\leq l^{}_{n}}$ be two families of mutually orthogonal equivalent projections in $ N $ such that
$p^{}_{n_{1}}+p^{}_{n_{2}}+\cdots+p^{}_{n_{k_{n}}}=p^{}_{n}$,
$q^{}_{n_{1}}+q^{}_{n_{2}}+\cdots+q^{}_{n_{l_{n}}}=q^{}_{n}$. Then there exists a $*$-isomorphism $\varphi$ from
$N$ onto $\varphi(N)$ such that
$$\varphi((p^{}_{n}+q^{}_{n})N(p^{}_{n}+q^{}_{n}))=\mathbb{M}_{k^{}_{n}+l^{}_{n}}
(p^{}_{n_{1}}Np^{}_{n_{1}})$$ and
$\varphi|_{(1-p_{n}-q_{n})N(1-p_{n}-q_{n})}$ is the identity map.
Denote by
$$N^{}_{n_{0}}=\left\{\begin{pmatrix}
0&x\\
y&0\\
\end{pmatrix}: x\in p^{}_{n}Nq^{}_{n},~y\in q^{}_{n}Np^{}_{n}\right\},$$
$$N_{1}=((p-p_{n})N(p-p_{n}))\ominus ((p-p_{n})A(p-p_{n})),$$
$$N_{2}=((q-q_{n})N(q-q_{n}))\ominus ((q-q_{n})A(q-q_{n})),$$
$$M_{k^{}_{n}+l^{}_{n}}=\{\{x_{ij}\}^{}_{ 1\leq i,j\leq k^{}_{n}+l^{}_{n}}\in
\mathbb{M}^{}_{k^{}_{n}+l^{}_{n}}(p^{}_{n_{1}}Np^{}_{n_{1}}):
x^{}_{ii}=0,\textmd{ for } 1\leq i\leq k^{}_{n}+l^{}_{n}\}.$$

By Lemma $2.1$, we have
$$M_{k^{}_{n}+l^{}_{n}}=\mathrm{span}\{u:u\in\mathscr{U}(M_{k^{}_{n}+l^{}_{n}})\}.$$
By Lemma $2.3$ and Lemma $2.4$, there is a unitary operator $v\in
N_{1}\oplus N_{2}$. Note that $$\varphi(N^{}_{n_{0}})\subseteq
M_{k^{}_{n}+l^{}_{n}}\mbox{  and  } M_{k^{}_{n}+l^{}_{n}}\oplus v
\subseteq \varphi(N)\ominus\varphi(A),$$ so that
$$\varphi(N^{}_{n_{0}})\oplus
v\subseteq\mathrm{span}\{u:u\in\mathscr{U}(\varphi(N)\ominus\varphi(A))\},$$
thus $(2.9)$ holds.
\end{proof}

\begin{theorem}
If $N$ is a type $\mathrm{II}_{1}$ factor and $A\subseteq N$ is a type
$\mathrm{I}$ von Neumann subalgebra, then $N \ominus
A=\overline{\mathrm{span}\{u:u\in\mathscr{U}(N\ominus
A)\}^{\mathrm{SOT}}}$.
\end{theorem}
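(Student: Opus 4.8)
The plan is to reduce the type $\mathrm{I}$ case to the abelian case already settled in Lemma $2.5$, using the matrix amplification of Lemma $2.2$. Since $A$ is a type $\mathrm{I}$ von Neumann subalgebra of the finite factor $N$, it is finite, so it decomposes as a direct sum $A=\bigoplus_{n} A_{n}$ of homogeneous summands $A_{n}=Az_{n}$ of type $\mathrm{I}_{n}$, where the $z_{n}$ are mutually orthogonal projections in the center of $A$ with $\sum_{n} z_{n}=1$. Because each $z_{n}$ lies in $A$, the relation $E_{A}(x)=\sum_{n}E_{A_{n}}(z_{n}xz_{n})$ shows that $x\in N\ominus A$ if and only if $z_{n}xz_{n}\in z_{n}Nz_{n}\ominus A_{n}$ for every $n$, while the off-diagonal corners $z_{m}xz_{n}$ $(m\neq n)$ are unconstrained; this is the decomposition along which I would work.

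First I would settle the homogeneous case, say $A$ of type $\mathrm{I}_{n}$. Choosing a system of matrix units $\{e_{ij}\}_{1\le i,j\le n}\subseteq A$ with $\sum_{i}e_{ii}=1$, the diagonal projections $e_{11},\dots,e_{nn}$ are mutually equivalent in $N$, so the discussion opening this section identifies $N$ with $\mathbb{M}_{n}(M)$, $M=e_{11}Ne_{11}$, carrying the $e_{ij}$ to the standard matrix units and $A$ to $\mathbb{M}_{n}(C)$ with $C=e_{11}Ae_{11}\cong Z(A)$ abelian and $C\subseteq M$. Here $M$ is again a type $\mathrm{II}_{1}$ factor. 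Since the entrywise map $(x_{ij})\mapsto(E_{C}(x_{ij}))$ is a trace-preserving $\mathbb{M}_{n}(C)$-modular projection onto $\mathbb{M}_{n}(C)$, uniqueness of the conditional expectation forces it to coincide with $E_{A}$, whence $N\ominus A=\mathbb{M}_{n}(M\ominus C)$. Now Lemma $2.5$ applies to the pair $(M,C)$, and Lemma $2.2$ upgrades it to the matrix level, giving $N\ominus A=\overline{\mathrm{span}\{u:u\in\mathscr{U}(N\ominus A)\}^{\mathrm{SOT}}}$ in the homogeneous case.

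To assemble the general case I would treat the diagonal and the off-diagonal corners separately, padding by fillers exactly as in the proofs of Lemmas $2.2$ and $2.4$. For a diagonal summand, each $u\in\mathscr{U}(z_{n}Nz_{n}\ominus A_{n})$ (available by the homogeneous case) is completed to a unitary of $N\ominus A$ by placing unitaries of $z_{m}Nz_{m}\ominus A_{m}$ on the remaining corners; such fillers exist because each $z_{m}Nz_{m}\ominus A_{m}$ is, by the homogeneous case, a nonzero $\mathrm{SOT}$-closed span of unitaries, hence contains unitaries. Averaging the two completions built from a filler $v$ and from $-v$ then isolates $u\oplus 0$, so each $z_{n}Nz_{n}\ominus A_{n}$, viewed in $N$, lies in the desired span. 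For an off-diagonal corner $e^{(m)}_{i}Ne^{(n)}_{j}$ between a diagonal matrix unit of $A_{m}$ and one of $A_{n}$ with $m\neq n$, the crucial observation is that $(e^{(m)}_{i}+e^{(n)}_{j})A(e^{(m)}_{i}+e^{(n)}_{j})=C_{m}\oplus C_{n}$ is \emph{abelian}; hence Lemma $2.5$ applies directly to the reduced factor $(e^{(m)}_{i}+e^{(n)}_{j})N(e^{(m)}_{i}+e^{(n)}_{j})$ with this abelian subalgebra, and the same filler-and-averaging device transports the resulting unitaries into $N\ominus A$. Summing the finitely many corners, and passing to $\mathrm{SOT}$-limits over the increasing finite subfamilies of the $z_{n}$ as in cases $(\mathrm{iii})$ and $(\mathrm{iv})$ of Lemma $2.4$, yields the theorem.

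The step I expect to be the main obstacle is the off-diagonal corner between \emph{inequivalent} homogeneous summands. There one cannot simply invoke the zero-diagonal trick of Lemma $2.1$, since after refining the two projections to a common size the subalgebra $A$ acquires nonzero off-diagonal (matrix-unit) components, and a zero-diagonal matrix need no longer lie in $N\ominus A$. What rescues the argument is precisely the reduction to a \emph{single} pair of matrix-unit corners $e^{(m)}_{i},e^{(n)}_{j}$: on such a pair the relevant part of $A$ collapses to the abelian algebra $C_{m}\oplus C_{n}$, so the genuinely non-abelian directions of $A$ are never activated and Lemma $2.5$ can be used. Verifying that the fillers keep every constructed operator inside $N\ominus A$—using that $f=e^{(m)}_{i}+e^{(n)}_{j}\in A$, so that $E_{A}$ respects the corner decomposition and the complementary fillers exist by the homogeneous case applied to the reduced (still homogeneous type $\mathrm{I}$) summands—is the remaining technical care.
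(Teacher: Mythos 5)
Your proposal is correct and follows essentially the same route as the paper: both decompose $A$ into homogeneous summands $\mathbb{M}_{k_i}(A_i)$ via central projections, handle the diagonal corners by Lemma 2.2 applied to the abelian case of Lemma 2.5, treat the off-diagonal corners between distinct summands by compressing to a single pair of matrix-unit corners where $A$ reduces to the abelian algebra $A_i\oplus A_j$ (the paper's $\widehat{N}_{ij}$), and finish with the filler-and-averaging padding plus SOT limits over finitely many central summands. The only difference is cosmetic bookkeeping (your explicit identification of $E_{\mathbb{M}_n(C)}$ with the entrywise $E_C$ versus the paper's modularity computation in the preliminaries).
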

\begin{proof}
Since $A$ is a type $\mathrm{I}$ von Neumann algebra, there exists a
family of mutually orthogonal central projections
$\{p^{}_{i}\}^{}_{i\in \Lambda}\subseteq A$ with sum $1$ such that
$$p^{}_{i}Ap^{}_{i}\cong\mathbb{M}_{k_{i}}(A_{i}),$$ where for each
$i\in\Lambda$, $A_{i}$ is an abelian von Neumann subalgebra and
$k_{i}$ is some positive integer. So we assume
$$A=\bigoplus_{i\in\Lambda}\mathbb{M}_{k_{i}}(A_{i}),$$$$p^{}_{i}Np^{}_{j}=\mathbb{M}_{k_{i}\times
k_{j}}(N_{ij}),$$ where $N_{ij}=e_{i_{1}}Ne_{j_{1}}$,
$\{e_{i_{n}}\}_{1\leq n\leq k_{i}}$ is a family of mutually
orthogonal equivalent subprojections of $p^{}_{i}$ with sum
$p^{}_{i}$.

For each $i,j\in\Lambda,~i<j$, denote by
$$P_{ij}=\bigoplus_{k\in\Lambda,k\neq i,j}
p^{}_{k}(N\ominus A)p^{}_{k},$$
$$\widetilde{N}^{}_{ij}=(p^{}_{i}+p^{}_{j})(N\ominus A)(p^{}_{i}+p^{}_{j}),$$
$$\widehat{N}^{}_{ij}=\left\{\begin{pmatrix}
x^{}_{1}&x^{}_{2}\\
x^{}_{3}&x^{}_{4}\\
\end{pmatrix}:x^{}_{1}\in N_{ii}\ominus A_{i},x^{}_{2}\in N_{ij},x^{}_{3}\in
N_{ji},x^{}_{4}\in N_{jj}\ominus A_{j}\right\},$$$$\widetilde{S}_{ij}=\left\{\begin{pmatrix}
0&X^{}_{2}\\
X^{}_{3}&0\\
\end{pmatrix}:
X^{}_{2}\in p^{}_{i}Np^{}_{j}, X^{}_{3}\in
p^{}_{j}Np^{}_{i}\right\} ,$$$$\widehat{S}_{ij}=\left\{\begin{pmatrix}
X^{}_{1}&0\\
0&X^{}_{4}\\
\end{pmatrix}:
X^{}_{1}\in p^{}_{i}(N\ominus A)p^{}_{i},X^{}_{4}\in p^{}_{j}(N\ominus A)p^{}_{j}\right\}.$$

By Lemma $2.2$ and Lemma $2.5$, we have
\begin{equation}
p^{}_{k}(N\ominus
A)p^{}_{k}=\overline{\mathrm{span}\{u:u\in\mathscr{U}(p^{}_{k}(N\ominus
A)p^{}_{k})\}^{\mathrm{SOT}}},
\end{equation} so that
$$\widehat{S}_{ij}=\overline{\mathrm{span}\{u:u\in\mathscr{U}(\widehat{S}_{ij})\}^{\mathrm{SOT}}}.$$

Next we show
\begin{equation}
\widetilde{S}_{ij}\subseteq\overline{\mathrm{span}\{u:u\in\mathscr{U}(\widetilde{N}^{}_{ij})\}^{\mathrm{SOT}}}.
\end{equation}

For each $x=\{x^{}_{kl}\}^{}_{ 1\leq k,l\leq k_{i} +k_{j}}\in \widetilde{S}^{}_{ij}$, we have
$$\begin{pmatrix}
0&x^{}_{st}\\
x^{}_{ts}&0\\
\end{pmatrix}\in\widehat{N}^{}_{ij},$$ for $1\leq s\leq k_{i} ,~k_{i}+1\leq t \leq k_{i} +k_{j}$.
By Lemma $2.5$, we have
$$\widehat{N}^{}_{ij}=\overline{\mathrm{span}\{u:u\in\mathscr{U}(\widehat{N}
^{}_{ij})\}^{\mathrm{SOT}}}$$ and
$$(N_{ii}\ominus A_{i})^{(k_{i}-1)}=\overline{\mathrm{span}\{u:u\in\mathscr{U}((N_{ii}\ominus
A_{i})^{(k_{i}-1)})\}^{\mathrm{SOT}}},$$
so that each operator in
$$\widehat{N}^{}_{ij}\oplus(N_{ii}\ominus A_{i})^{(k_{i}-1)}\oplus(N_{jj}\ominus A_{j})^{(k_{j}-1)}$$
can be approximated in the strong-operator topology by a linear
combination of finitely many unitary operators in this set. Then
relation $(2.11)$ holds since
$$\widehat{N}^{}_{ij}\oplus(N_{ii}\ominus
A_{i})^{(k_{i}-1)}\oplus(N_{jj}\ominus
A_{j})^{(k_{j}-1)}\subseteq\widetilde{N}^{}_{ij}.$$ Thus we have
$$\widetilde{N}^{}_{ij}=\overline{\mathrm{span}\{u:u\in\mathscr{U}(\widetilde{N}^{}_{ij})\}^{\mathrm{SOT}}}.$$

If $\Lambda$ is a finite set, then by $(2.10)$, we have
$$P_{ij}=\overline{\mathrm{span}\{u:u\in\mathscr{U}(P_{ij})\}^
{\mathrm{SOT}}},$$ so that $$\widetilde{N}^{}_{ij}\oplus
P_{ij}=\overline{\mathrm{span}\{u:u\in\mathscr{U}(\widetilde{N}^{}_{ij}\oplus
P_{ij})\}^ {\mathrm{SOT}}},$$ which suffices to prove this theorem.

If $\Lambda$ is an infinite set, then
for each $i\in\Lambda$, denote by
$$q^{}_{i}=\sum^{i}_{ k=1} p^{}_{k},~~N_{i}=q^{}_{i}(N\ominus A)q^{}_{i},~~~P_{i}=\bigoplus_{i+1\leq k<\infty}
p^{}_{k}Np^{}_{k}\ominus p^{}_{k}Ap^{}_{k},$$  then
\begin{equation}
\overline{\bigcup_{i\in\Lambda}N_{i}^{\mathrm{SOT}}}=N\ominus A.
\end{equation}
According to the case where $\Lambda$ is finite, we obtain
$$N_{i}=\overline{\mathrm{span}\{u:u\in\mathscr{U}(N_{i})\}^
{\mathrm{SOT}}}.$$ By Lemma 2.2 and Lemma 2.5, we have
$$P_{i}=\overline{\mathrm{span}\{u:u\in\mathscr{U}(P_{i})\}^
{\mathrm{SOT}}},$$ so that\begin{equation}N_{i}\oplus
P_{i}=\overline{\mathrm{span}\{u:u\in\mathscr{U}(N_{i}\oplus
P_{i})\}^ {\mathrm{SOT}}}.\end{equation} Hence $(2.12)$ and $(2.13)$
allow us to complete this theorem.
\end{proof}


\bibliographystyle{amsplain}

\end{document}